\documentclass{amsart}
\usepackage{amssymb,amsmath,amsthm,hyperref}

\newtheorem{theorem}{Theorem}

\newtheorem{lemma}[theorem]{Lemma}

\newtheorem{remark}[theorem]{Remark}
\newtheorem{example}[theorem]{Example}

\begin{document}
\title{The shortest harmonic sums with decreasing denominator}

\author{Wouter van Doorn}
\address{Groningen, the Netherlands}
\email{wonterman1@hotmail.com}

\date{}

\begin{abstract}
For a positive integer $a$, let $b(a)$ be the smallest integer $b > a$ such that the denominator of $\frac{1}{a} + \frac{1}{a+1} + \cdots + \frac{1}{b}$ is smaller than the denominator of $\frac{1}{a} + \frac{1}{a+1} + \cdots + \frac{1}{b-1}$. Recently it was shown that the limit inferior $$\liminf_{a \to \infty} \left(\frac{b(a) - a}{\log a}\right)$$ exists and is positive. Here we find its exact value.
\end{abstract}

\maketitle

\section{Introduction}
For positive integers $a < b$, let $u_{a,b}$ and $v_{a,b}$ be the coprime positive integers for which $$\sum_{i=a}^b \frac{1}{i} = \frac{u_{a,b}}{v_{a,b}}.$$ For a given $a \in \mathbb{N}$, let $b(a)$ be the smallest integer $b > a$ for which $v_{a,b} < v_{a,b-1}$. Whether such a $b(a)$ exists for all $a$ was asked by Erd\H{o}s and Graham in~\cite{cnt}, recorded as Erd\H{o}s Problem \#290 at Bloom's website~\cite{EP290}, and resolved by the author in~\cite{VD}, with the upper bound $b(a) \le 4.374(a-1)$ for all $a \ge 6$. \\

As for lower bounds on $b(a)$, in \cite{VD} it was also proved that $b(a) - a \gg \log a$ holds for all $a$, but that there are infinitely many $a$ with $b(a) - a \ll \log a$. To be a bit more precise, for a positive integer $d$, let us define the polynomial $$f_d(x) := \sum_{i=0}^d \prod_{\substack{j=0 \\ j\neq i}}^d (x-j),$$ and let $\delta(f_d)$ be the density of primes $p$ such that $f_d(x) \equiv 0 \pmod{p}$ is solvable. This density exists by Chebotarev's density theorem, and the values of $\delta(f_d)$ depend on specifics of the Galois group of $f_d(x)$. For more information on this we refer to~\cite[Section 3.3]{VD}. With $$c := \sum_{d=1}^{\infty} \frac{\delta(f_d)}{d(d+1)},$$ there it was also proved that we have the string of inequalities $$0.54 < \frac{1}{1+c} \le \liminf_{a \to \infty} \left(\frac{b(a) - a}{\log a}\right) \le \frac{1}{2c} < 0.61.$$ This paper is dedicated to showing that the lower bound is sharp. 

\begin{theorem} \label{main}
We have $$\liminf_{a \to \infty} \left(\frac{b(a) - a}{\log a}\right) = \frac{1}{1+c}.$$
\end{theorem}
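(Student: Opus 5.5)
The lower bound $\liminf_{a\to\infty} (b(a)-a)/\log a \ge 1/(1+c)$ is already proved in \cite{VD}, so only the matching upper bound remains. That is, for every $\varepsilon>0$ I must exhibit infinitely many $a$ with $b(a)-a \le (1/(1+c)+\varepsilon)\log a$. The plan is to reverse-engineer the lower-bound argument of \cite{VD}. Its content, as I understand it, is that $b(a)=a+k$ forces $a$ into a system of congruences whose moduli multiply to at least $e^{(1+c+o(1))k}$. I will show that these forced conditions can be met simultaneously by a single residue class $a \bmod M$, with $\log M = (1+c+o(1))k$, and that they are also sufficient for $b(a) \le a+k$. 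Taking the least positive $a$ in that class, or a controlled translate of it, then gives $\log a \le (1+c+o(1))k$, which is what we want.

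\textbf{Step 1: a criterion for a decrease.} First I make the decrease criterion explicit. One has $v_{a,b}<v_{a,b-1}$ exactly when there is a prime power $p^e$ dividing $v_{a,b-1}$ whose $p$-part is lost on adding $1/b$. Writing $b-1=a+d$, we have
\[
\sum_{i=0}^d \frac{1}{a+i} = \frac{f_d(a+d)}{\prod_{j=0}^d (a+d-j)}.
\]
So cancellation by a large prime $p$ dividing one of the terms $a+j$ amounts to $f_d$ having a root modulo $p$ in a prescribed residue class. The small primes $p\le k$ instead play the role of $\operatorname{lcm}(1,\dots,k)$, and they contribute the term $1$ (that is, $\log\operatorname{lcm}(1,\dots,k) \sim k$).

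\textbf{Step 2: the construction.} I will take a large parameter $k$ and choose $a$ by the Chinese remainder theorem. The goal is that adding $1/b$ for $b\le a+k$ keeps increasing the denominator, except at one prescribed step where the decrease happens. For the window lengths $d$ with $1\le d\le k$, I will use primes $p$ for which $f_d$ has a root modulo $p$. By Chebotarev, their logarithmic contribution up to the relevant scale is $\delta(f_d)$ times the length of the range of $p$ allotted to $d$. Choosing these ranges as in \cite{VD}, namely primes of size roughly $e^{k/(d+1)}$ to $e^{k/d}$, gives total $\sum_d \delta(f_d)\, k/(d(d+1)) = ck$. Together with the small primes this gives $\log M = (1+c+o(1))k$. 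I then check, using prime number theorem type estimates, that every integer $a+j$ with $j\le k$ has a suitable divisor among the chosen primes, so that a reduction is available or is prevented exactly as designed.

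\textbf{Main obstacle.} The hardest step is showing that $a$ does not accidentally trigger a decrease earlier, or fail to trigger one, through unplanned large prime factors of $a+j$ or unintended roots of $f_d$. I would handle this by allowing $a$ to range over $a_0 + tM$ with $t \le e^{\varepsilon k}$. A sieve or first-moment count then shows that most $t$ avoid all bad coincidences, since each bad event has probability $O(1/p)$ with $p$ large. This costs only an extra factor $e^{\varepsilon k}$ in $a$, which is harmless as $\varepsilon \to 0$.
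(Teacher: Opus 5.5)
The genuine gap is in your fix for the ``main obstacle''. Translating $a$ by $tM$ with $t\le e^{\varepsilon k}$ is not harmless, and the real difficulty lies elsewhere.

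First, Step 2 mislocates $c$. Primes of size $e^{k/(d+1)}$ to $e^{k/d}$ divide at most one of $a,\dots,a+k$, so $f_d$ is irrelevant to them, and their logarithms sum to far more than $k/(d(d+1))$. The relevant primes are $q\in(\frac{k}{d+1},\frac{k}{d}]$, which have the $d+1$ multiples $b,b-q,\dots,b-dq$ in the window when $q\mid b$. Let $P$ and $Q$ be the products of those primes at which $f_d$ has no root, respectively a root, and write $b=xPQ$. The term $ck$ arises because for $q\mid Q$ one fixes $b$ modulo $q^2$, by requiring $b/q$ to be a root of $f_d$ mod $q$. This pins $x$ down modulo $Q$, so the modulus is $PQ^2$, with $\log(PQ)\approx k$ and $\log Q\approx ck$.

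Second, your Step 1 criterion is only necessary, not ``exactly when''. Adding $1/b$ also brings into the denominator every prime power of $b$ not already there. In particular, every prime $p>k$ dividing $b$ enters with full multiplicity, since $b$ is the only multiple of $p$ in $[a,b]$. The only saving is the factor $Q$ from cancellation, so you essentially need $x<Q$. That is, $x$ must be the least residue of its class, which leaves no room for translates.

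For $t$ of size $e^{\varepsilon k}$, the number $x+tQ$ exceeds $Q$ by an exponential factor, while its $k$-smooth part is typically only $k^{O(1)}$. So the denominator \emph{increases} at $b$: failure is the generic outcome, not an $O(1/p)$ coincidence. The lower bound you quote confirms this. With $\log M=(1+c+o(1))k$, most $t\le e^{\varepsilon k}$ give $\log a\ge(1+c+\varepsilon/2+o(1))k$, which forces $b(a)-a>k$ for large $k$. Conversely, an accidental earlier decrease needs no prevention, since it only makes $b(a)$ smaller.

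What is missing is exactly the step on which the paper spends nearly all its effort. With $x$ fixed modulo $Q$ and required to lie in $(Q/k,Q)$, the only freedom is the choice of roots of $f_d$ modulo each $q\mid Q$. You must show that some choice gives a least residue $x$ which, for every $p\mid P$, avoids the fewer than $D$ classes with $f_{d-1}(xP_pQ-1)\equiv0\pmod p$; otherwise $p$ can enter the denominator at $b$. These are the hypotheses of Theorem~\ref{reductio} with $n=k$.

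A plain first-moment count over root choices fails, because reducing modulo $Q$ subtracts an uncontrolled multiple of $Q$, which scrambles residues modulo $p$. The paper handles this in three moves:
\begin{enumerate}
\item It pairs root swaps at consecutive primes of $S'_2$ (ordered by $t_qQ_q$), so that the total shift $\sum\varepsilon_i\Delta_i$ has absolute value below $Q$ and only three wrap-arounds occur.
\item It applies a Hal\'asz-type anticoncentration inequality over $\mathbb{F}_p$, fed by the bound $\sum_pR_p\ll|S'_2|^2$.
\item It uses the $2D$ roots of $f_{2D}$ at one prime $q_0$ to handle the at most two exceptional primes and to force $x>Q/k$.
\end{enumerate}
Without an argument of this kind, your construction does not produce any $a$ with $b(a)-a\le(\frac{1}{1+c}+\varepsilon)\log a$.
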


We will actually prove the slightly stronger statement that for all $C < 1+c$ there exists an $N$ such that for all $n \ge N$ there exist integers $a, b > e^{Cn}$ with $b = a + n$ for which $v_{a,b} < v_{a,b-1}$. We further remark that $\frac{1}{1+c}$ is approximately equal to $0.546$, although we make no effort here to calculate it as precisely as possible. Even so, if we combine Theorem~\ref{main} with~\cite[Lemma 32]{VD}, then we find that there are infinitely many $a$ and $b$ with $$a < b < a + 0.55 \log a \qquad \text{and} \qquad v_{a,b} < v_{a,b-1}.$$

\section{Reduction of the main result}
In this section we reduce the proof of Theorem~\ref{main} to the existence of a certain integer $x$. This reduction already follows from the arguments in~\cite[Section 3.3]{VD}, but in order to make this paper self-contained, we give the proof here in full. To set this all up, we need a couple of definitions. \\

Let $D \ge 2$ be an integer and let $n$ be an integer that is sufficiently large in terms of $D$. For a positive integer $d \le 2D$, let $S_d$ be the set of primes $p$ with $\frac{n}{d+1} < p \le \frac{n}{d}$ such that $f_d(x) \equiv 0 \pmod{p}$ is solvable. 

\begin{example} \label{f2pnt}
By expanding out the definition of $f_d(x)$ for $d = 2$, we find $f_2(x) = 3x^2 - 6x + 2$. For this polynomial, one can check that an odd prime $q \in \left(\frac{n}{3}, \frac{n}{2}\right]$ belongs to $S_2$ precisely when $q \equiv \pm 1 \pmod{12}$, in which case $f_2(x)$ actually has two zeros modulo $q$. Moreover, since $\frac{n}{d} - \frac{n}{d+1} = \frac{n}{d(d+1)}$, by the prime number theorem in arithmetic progressions we find $$\lvert S_2 \rvert = \left(\frac{1}{12} + o(1) \right) \frac{n}{\log n},$$ when $n$ goes to infinity.
\end{example}

We now define $Q$ as the product of all primes $q \in \bigcup_{d \le 2D} S_d$ and, for a prime factor $q$ of $Q$, set $Q_q := \frac{Q}{q}$. \\

Similarly, for a positive integer $d \le D$, we let $T_d$ be the set of primes $p$ with $\frac{n}{d+1} < p \le \frac{n}{d}$ such that $f_d(x) \equiv 0 \pmod{p}$ is \emph{not} solvable, define $P$ as the product of all primes $p \in \bigcup_{d \le D} T_d$ and set $P_p := \frac{P}{p}$ for a prime $p \mid P$. We then claim that it is sufficient to find an $x$ with $\frac{Q}{n} \le x < Q$ satisfying various congruences. 

\begin{theorem} \label{reductio}
Assume that for all integers $D \ge 2$ and all sufficiently large integers $n$ there exists an integer $x < Q$ with the following three properties:

\begin{enumerate}
	\item We have $x > \frac{Q}{n}$.
	\item For all $d \le 2D$ and all $q \in S_d$ we have $f_d(xPQ_q) \equiv 0 \pmod{q}$.
	\item For all $d \le D$ and all $p \in T_d$ we have $f_{d-1}(xP_pQ - 1) \not \equiv 0 \pmod{p}$.
\end{enumerate}

Then $$\liminf_{a \to \infty} \left(\frac{b(a) - a}{\log a}\right) \le \frac{1}{1+c}.$$
\end{theorem}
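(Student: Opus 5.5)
The plan is to take $b := xPQ$ and $a := b - n$, and to show that $v_{a,b} < v_{a,b-1}$. Together with a size estimate for $\log a$, this gives, for every $D$ and every large $n$, a pair with $b - a = n$ and $\log a \approx (1+c_D)n$, where $c_D \to c$. The one identity I will use repeatedly is
$$\sum_{i=0}^d \frac{1}{y-i} = \frac{f_d(y)}{\prod_{j=0}^d (y-j)}.$$

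\emph{Local behaviour at the primes of $Q$ and $P$.} Fix $d$ and a prime $r$ with $\frac{n}{d+1} < r \le \frac{n}{d}$ and $r \mid b$. Since $dr \le n < (d+1)r$, the multiples of $r$ in $[a,b]$ are exactly $b, b-r, \dots, b-dr$, and for $n$ large we have $r^2 > n$ (as $r > n/(2D+1)$). Writing $m = b/r$, the $r$-adic part of $\sum_{i=a}^b 1/i$ is therefore governed by $\frac{1}{r}\sum_{i=0}^{d} \frac{1}{m-i}$, with all other terms $r$-integral.

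For $q \in S_d$ we have $m = xPQ_q$. Property (2) and the identity show that this sum is $q$-integral, provided the denominators $m-i$ are prime to $q$. I would check that they are, or else adjust this step. Hence $q \nmid v_{a,b}$.

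For $p \in T_d$ ($d \le D$), the multiples of $p$ in $[a,b-1]$ are $b-p, \dots, b-dp$, so the relevant sum is $\frac{1}{p}\sum_{i=0}^{d-1}\frac{1}{(m-1)-i}$ with $m-1 = xP_pQ-1$. Property (3) then gives $p \,\|\, v_{a,b-1}$.

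\emph{Comparing the denominators.} From
$$\frac{1}{b} = \frac{u_{a,b}}{v_{a,b}} - \frac{u_{a,b-1}}{v_{a,b-1}}$$
we get $b \mid \operatorname{lcm}(v_{a,b}, v_{a,b-1})$. Since $Q \mid b$ and $\gcd(Q, v_{a,b}) = 1$, this forces $Q \mid v_{a,b-1}$. Conversely, $v_{a,b} \mid \operatorname{lcm}(v_{a,b-1}, b)$, and
$$\operatorname{lcm}(v_{a,b-1}, b) = \frac{v_{a,b-1}\, b}{\gcd(v_{a,b-1}, b)} \le \frac{v_{a,b-1}\, xPQ}{PQ} = x\, v_{a,b-1},$$
because $PQ \mid \gcd(v_{a,b-1}, b)$. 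Moreover $v_{a,b}$ is coprime to $Q$, while the $Q$-part of this lcm is at least $Q$. Dividing it out gives
$$v_{a,b} \le \frac{x\, v_{a,b-1}}{Q} < v_{a,b-1},$$
using $x < Q$. This gives $b(a) \le a + n$.

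\emph{Size.} By property (1), $a = xPQ - n \ge PQ^2/n - n$, so $\log a \ge \log P + 2\log Q - O(\log n)$. By the prime number theorem with Chebotarev's density theorem (as in Example~\ref{f2pnt}),
$$\log Q = (1+o(1))\, n\sum_{d \le 2D} \frac{\delta(f_d)}{d(d+1)}, \qquad \log P = (1+o(1))\, n\sum_{d\le D}\frac{1-\delta(f_d)}{d(d+1)}.$$
Hence
$$\log a \ge (1+o(1))\, n\Big(\sum_{d \le D}\frac{1}{d(d+1)} + \sum_{d\le 2D}\frac{\delta(f_d)}{d(d+1)}\Big) \ge (1+o(1))\, n\Big(1+c-\frac{2}{D+1}\Big),$$
where I used $\sum_{d\le D}\frac{1}{d(d+1)} = 1 - \frac{1}{D+1}$ and bounded the tail $\sum_{d>2D}\frac{\delta(f_d)}{d(d+1)}$ by $\frac{1}{2D+1}$.

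Since $b(a) - a \le n$, letting $n \to \infty$ and then $D \to \infty$ gives the liminf bound. The main obstacle I expect is the local step for the primes of $Q$ and $P$: the divisibility bookkeeping there, including whether any denominator $m-i$ could itself be divisible by $q$, needs the most care.
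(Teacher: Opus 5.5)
Your proposal is correct. It shares the paper's skeleton: $b = xPQ$ and $a = b-n$, the identity $\sum_{i=0}^d \frac{1}{y-i} = f_d(y)/\prod_j (y-j)$ at each prime of $PQ$, and the same Chebotarev/PNT size count. Where you differ is in how the local information is assembled.

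\textbf{How your route differs.} The paper proves prime-by-prime inequalities. These are $\nu_q(v_{a,b}) < \nu_q(v_{a,b-1})$ for $q \mid Q$, and $\nu_p(v_{a,b}) \le \nu_p(v_{a,b-1}) + \nu_p(x)$ for $p \nmid Q$. For $p \mid P$ this requires a three-way split, according to whether $xP_pQ \bmod p$ is $0$, some $i \in \{1,\dots,d\}$, or neither.

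You instead use the two divisibilities $b \mid \mathrm{lcm}(v_{a,b}, v_{a,b-1})$ and $v_{a,b} \mid \mathrm{lcm}(v_{a,b-1}, b)$.
\begin{itemize}
\item The first gives $Q \mid v_{a,b-1}$ for free once $\gcd(v_{a,b},Q)=1$. The paper instead computes $q\sum_{i=a}^{b-1} 1/i \bmod q$ directly.
\item The second is the global form of the paper's bound $\nu_p(v_{a,b}) \le \max(\nu_p(v_{a,b-1}), \nu_p(b))$. Via $\mathrm{lcm}(v,b) = vb/\gcd(v,b)$ it only needs the divisibility $PQ \mid v_{a,b-1}$, not exact valuations. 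The inequality $\max(\nu_p(v_{a,b-1}), 1+\nu_p(x)) \le \nu_p(v_{a,b-1}) + \nu_p(x)$ for $\nu_p(v_{a,b-1}) \ge 1$ is then built in. This absorbs the primes $p \nmid PQ$ and all the subcases at $p \mid P$ at once.
\end{itemize}
You reach the same bound $v_{a,b} \le x\,v_{a,b-1}/Q$ more quickly. The paper's route gives finer information, such as $\nu_q(v_{a,b-1}) = 1$, which is not needed. Your one-sided estimate $\log a \ge (1+c-\frac{2}{D+1}+o(1))n$ also handles the order of limits in $n$ and $D$ more explicitly than the paper.

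\textbf{Two local points to tighten.}
\begin{enumerate}
\item The check you deferred for $q \in S_d$ is immediate, and it is exactly what the paper does. For $0 \le i \le d$, $f_d(i) = \prod_{j \ne i}(i-j)$ is a product of nonzero integers of absolute value at most $d < q$, so $f_d(i) \not\equiv 0 \pmod q$. Hence property (2) itself forces $xPQ_q \not\equiv i \pmod q$. All denominators $m-i$ are then prime to $q$, and $q \nmid v_{a,b}$ follows.
\item Your claim $p \,\|\, v_{a,b-1}$ for $p \in T_d$ is false when $xP_pQ \equiv i \pmod p$ for some $1 \le i \le d$. In that case $p^2 \mid b-ip \in [a,b-1]$, so your identity has a denominator divisible by $p$. (Property (3) is automatically satisfied there, since $f_{d-1}(i-1) \not\equiv 0$.) Because $p^2 > n$, the number $b-ip$ is the unique element of the interval divisible by $p^2$, so in fact $\nu_p(v_{a,b-1}) = \nu_p(b-ip) \ge 2$. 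Your lcm argument only uses $p \mid v_{a,b-1}$, which still holds here. So weaken the claim to $p \mid v_{a,b-1}$ and treat this case separately.
\end{enumerate}
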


\begin{proof}
Let $D \ge 2$ be a fixed integer, let $n$ be sufficiently large in terms of $D$, and let $x$ be as in the theorem statement. We then define $b := xPQ$ and $a := b - n$, and need to show that we have $$b \le a + \left(\frac{1}{1+c} + o(1) \right) \log a \qquad \text{and} \qquad v_{a,b} < v_{a,b-1}.$$ As for the first inequality, we apply the fact that the densities $\delta(f_d)$ exist for fixed $d$, which by the prime number theorem give
\begin{align*}
P &= \exp\left[\left(\sum_{d=1}^{D} \frac{1 - \delta(f_d)}{d(d+1)} + o(1)\right)n\right],\\
Q &= \exp\left[\left(\sum_{d=1}^{2D} \frac{\delta(f_d)}{d(d+1)} + o(1)\right)n\right],\\
x &= Q^{1 + o(1)} = \exp\left[\left(\sum_{d=1}^{2D} \frac{\delta(f_d)}{d(d+1)} + o(1)\right)n\right].
\end{align*}
Here, the $o(1)$ terms go to $0$ as $n$ goes to infinity. By then letting $D$ go to infinity as well, we find $$b = xPQ = \exp\left[\big(1 + c + o(1)\big)n\right],$$ from which the desired inequality follows. \\

It remains to show that the above definitions of $a$ and $b$ imply $v_{a,b} < v_{a,b-1}$. For this, we make use of the $p$-adic valuation $\nu_p$. Since $x < Q$ by assumption, in order to show $v_{a,b} < v_{a,b-1}$ it suffices to show that the two inequalities $$\nu_q(v_{a,b}) < \nu_q(v_{a,b-1}) \qquad \text{and} \qquad \nu_p(v_{a,b}) \le \nu_p(v_{a,b-1}) + \nu_p(x)$$ hold for all $q \mid Q$ and $p \nmid Q$ respectively. Indeed, when multiplying over all primes, these two sets of inequalities imply $\frac{v_{a,b}}{v_{a,b-1}} \le \frac{x}{Q} < 1$, finishing the proof. Hence, let us first show that $\nu_q(v_{a,b}) < \nu_q(v_{a,b-1})$ holds for all $q \mid Q$. \\

Let $q$ be an arbitrary prime factor of $Q$, assume without loss of generality $q \in S_d$, and recall the definition of $f_d(x)$. For an integer $i \in \{0, 1, \ldots, d\}$ we see that all terms in $f_d(i)$ vanish, except for the $i$th term. In particular, as $$q > \frac{n}{2D+1} > 2D \ge d$$ if $n$ is sufficiently large, we find $$f_d(i) = \prod_{\substack{j=0 \\ j \neq i}}^d (i-j) \not\equiv 0 \pmod{q}.$$ We therefore conclude from the second property that $xPQ_q \not \equiv i \pmod{q}$ for all $i \in \{0, 1, \ldots, d\}$. This implies that, among the multiples $b, b-q, \ldots, b - dq$ of $q$ in the interval $[a, b]$, none of these multiples are divisible by $q^2$, since $$\frac{b - iq}{q} \equiv xPQ_q - i \not \equiv 0 \pmod{q}.$$ By multiplying the partial harmonic sum by $q$, all terms vanish modulo $q$ except for the terms with denominator equal to one of $b, b - q, \ldots, b - dq$. Hence, $$q \sum_{i = a}^b \frac{1}{i} \equiv \sum_{i = 0}^d \frac{1}{xPQ_q - i} \equiv \frac{f_d(xPQ_q)}{\prod_{i=0}^d (xPQ_q - i)} \equiv 0 \pmod{q},$$ so that $\nu_q(v_{a,b}) = 0$. On the other hand, omitting the final term $\frac{1}{b}$ in the previous calculation gives $$q \sum_{i = a}^{b-1} \frac{1}{i} \equiv -\frac{1}{xPQ_q} \not \equiv 0 \pmod{q},$$ which implies $\nu_q(v_{a,b-1}) \ge 1$, as desired. In fact, $\nu_q(v_{a,b-1})$ is exactly equal to $1$, since we just saw that none of the multiples of $q$ in $[a, b]$ are divisible by $q^2$. \\

To finish the proof, we need to show that $$\nu_p(v_{a,b}) \le \nu_p(v_{a,b-1}) + \nu_p(x)$$ holds for every prime $p \nmid Q$. This is not hard to see for $p \nmid P$, as we then get $\nu_p(b) = \nu_p(x)$, while the inequalities $$\nu_p(v_{a,b}) \le \max\big(\nu_p(v_{a,b-1}), \nu_p(b)\big) \le \nu_p(v_{a,b-1}) + \nu_p(b)$$ hold in general. Hence, for the rest of the proof we may assume $p \mid P$, so let $p \in T_d$, say. \\

As before, the multiples of $p$ in the interval $[a, b]$ are exactly $b, b - p, \ldots, b - dp$. If $xP_pQ \not \equiv i \pmod{p}$ for all $0 \le i \le d$, then none of those multiples of $p$ are divisible by $p^2$, so that the third property gives $$p \sum_{i=a}^{b-1} \frac{1}{i} \equiv \sum_{i=1}^d \frac{1}{xP_pQ - i} \equiv \frac{f_{d-1}(xP_pQ - 1)}{\prod_{i=1}^d (xP_pQ - i)} \not \equiv 0 \pmod{p}.$$ In particular, $\nu_p(v_{a,b-1}) = 1 \ge \nu_p(v_{a,b})$, which suffices. On the other hand, if $xP_pQ$ is congruent to $i \pmod{p}$ for some $0 \le i \le d$, then $b - ip$ is divisible by $p^2$. Moreover, since $p > \frac{n}{d+1}$ implies $p > d$ as long as $n$ is large enough, $b - ip$ is then the unique integer in the interval $[a, b]$ divisible by $p^2$. In this case, $i \neq 0$ implies $$\nu_p(v_{a,b}) = \nu_p(b - ip) = \nu_p(v_{a,b-1}) \le \nu_p(v_{a,b-1}) + \nu_p(x),$$ while $i = 0$ implies $\nu_p(v_{a,b-1}) = 1$ by the third property again, so that 
\begin{equation*}
\nu_p(v_{a,b}) = \nu_p(b) = \nu_p(P) + \nu_p(x) = 1 + \nu_p(x) = \nu_p(v_{a,b-1}) + \nu_p(x). \qedhere
\end{equation*}
\end{proof}

\section{Proof of the main result}
In this section we prove Theorem~\ref{main} by constructing an $x$ for which the three properties in Theorem~\ref{reductio} hold. As the lower bound on the liminf of $\frac{b(a) - a}{\log a}$ is precisely~\cite[Lemma 31]{VD}, this is sufficient.

\begin{proof}[Proof of Theorem~\ref{main}]
Let $D \ge 2$ be given, let $n$ be sufficiently large in terms of $D$, and let the products $P$ and $Q$ be as before. We define the following zeros of $f_d$ (or a transformed version thereof) for prime factors of $P$ and $Q$:

\begin{itemize}
	\item For every $q \in S_2$, let $x_q^{(1)}$ and $x_q^{(2)}$ denote the two distinct zeros of $f_2(x) \pmod{q}$.
	\item Let $x_{q_0}^{(1)}, \ldots, x_{q_0}^{(2D)}$ denote the $2D$ distinct zeros of $f_{2D}(x) \pmod{q_0}$ for some specific $q_0 \in \left(\frac{n}{2D+1}, \frac{n}{2D} \right]$ for which $f_{2D}$ splits completely modulo $q_0$. The existence of such a $q_0$ is guaranteed by Chebotarev's density theorem, assuming $n$ is large enough.
		\item For every $d \le 2D$ different from $2$ and every $q \in S_d$ different from $q_0$, let $x_q^{(1)}$ be any zero of $f_d(x) \pmod{q}$.
		\item For every $d \le D$ and every $p \in T_d$, let $x_p^{(1)}, \ldots, x_p^{(e_p)}$ denote the distinct values in $[1, p]$ for which $f_{d-1}(x_p^{(i)}P_pQ - 1) \equiv 0 \pmod{p}$ for all $i$.
\end{itemize}

\begin{remark} \label{epsmall}
For all $d \le D$ and all $p \in T_d$ we have $e_p < D$. Indeed, $f_{d-1}$ has degree $d-1$, so if $n$ is large enough, then $f_{d-1} \pmod{p}$ has at most $d-1 < D$ roots for any $p \in T_d$.
\end{remark}

For $1 \le k \le 2D$, let $x_0^{(k)}$ be the smallest positive integer with, for all $q \mid Q$ different from $q_0$, $$x_0^{(k)} \equiv x_q^{(1)}(PQ_q)^{-1} \pmod{q} \qquad \text{and} \qquad x_0^{(k)} \equiv x_{q_0}^{(k)}(PQ_{q_0})^{-1} \pmod{q_0}.$$ These $x_0^{(1)}, \ldots, x_0^{(2D)}$ are our initial potential values for $x$ and they ensure the second property. Essentially all we will do from here is change the congruence $$x \equiv x_q^{(i)}(PQ_q)^{-1} \pmod{q} \qquad \text{to} \qquad x \equiv x_q^{(i')}(PQ_q)^{-1} \pmod{q}$$ for various $q \in S_2 \cup \{q_0\}$ to obtain a final $x$ for which the first and third properties hold as well. Now, for a $q \in S_2$, let $t_q \in [1, q)$ be the smallest positive integer with $$t_qQ_q \equiv \left(x_q^{(2)} - x_q^{(1)} \right) (2PQ_q)^{-1} \pmod{q}.$$ Then switching from $x_q^{(1)}$ to $x_q^{(2)}$ changes $x$ by $2t_qQ_q \pmod{Q}$. These changes for $q \in S_2$ in principle give $2^{\lvert S_2 \rvert}$ values of $x$ to work with, but we will not need to consider all of them. \\

Let $S'_2$ be equal to $S_2$ but with the largest prime removed if $\lvert S_2 \rvert$ is odd, and further define $x_1^{(k)}$ for a positive integer $k \le 2D$ to be the unique integer in $[1, Q]$ with $$x_1^{(k)} \equiv x_0^{(k)} + \sum_{q \in S'_2} t_q Q_q \pmod{Q}.$$ In a sense, $x_1^{(k)}$ is the halfway point between making all swaps for $q \in S'_2$. We now order the primes $q \in S'_2$ according to the value of $t_qQ_q$. That is, we write $S'_2 = \{q_1, q_2, \ldots, q_{\lvert S'_2 \rvert}\}$ with $t_{q_i}Q_{q_i} < t_{q_j}Q_{q_j}$ if $i < j$. We note that these values of $t_{q_i}Q_{q_i}$ are all distinct, as otherwise such a value would be divisible by both $Q_{q_i}$ and $Q_{q_j}$ for distinct $i, j$. However, this would imply that $t_{q_i}Q_{q_i}$ is divisible by $Q$, which is impossible as $1 \le t_{q_i}Q_{q_i} < Q$, by definition of $t_{q_i}$. \\ 

For every odd $i < \lvert S'_2 \rvert$ we only consider those values of $x$ we obtain by switching exactly one of $q_i$ and $q_{i+1}$ to its second root. With $$\Delta_i := t_{q_{i+1}}Q_{q_{i+1}} - t_{q_i}Q_{q_i} > 0,$$ switching $q_i$ to its second root decreases the residue by $\Delta_i$ relative to $x_1^{(k)}$, while switching $q_{i+1}$ to its second root increases it by $\Delta_i$. If we consider all these paired changes, then the final result is congruent to $$x_1^{(k)} + \sum_{i < \lvert S'_2\rvert \text{ odd}} \varepsilon_i \Delta_i \pmod{Q},$$ where the $\varepsilon_i \in \{-1, 1\}$ can be chosen arbitrarily. Moreover, $$\left \lvert \sum_{i < \lvert S'_2\rvert \text{ odd}} \varepsilon_i\Delta_i \right \rvert \le \sum_{i < \lvert S'_2\rvert \text{ odd}} \Delta_i \le \sum_{i = 1}^{\lvert S'_2\rvert - 1} (t_{q_{i+1}}Q_{q_{i+1}} - t_{q_i}Q_{q_i}) < Q,$$ due to the fact that the final sum telescopes and $1 \le t_q Q_q  < Q$ for all $q$. We now need two lemmas on the prime factors of $\Delta_i$ and $\varepsilon_i \Delta_i + \varepsilon_j \Delta_j$.

\begin{lemma} \label{atleastonesix}
For all but at most two primes $p \mid P$, at least $\frac{1}{6} \lvert S'_2\rvert$ of the $\Delta_i$ are non-zero modulo $p$.
\end{lemma}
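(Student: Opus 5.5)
The plan is to argue by contradiction, using the arithmetic of the numbers $y_q := t_qQ_q$ modulo primes $p \mid P$ rather than their sizes. I have not found the decisive step, and the third paragraph flags it.

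\textbf{Setup.} Write $m := \lvert S'_2\rvert/2$ for the number of differences $\Delta_i$ with $i$ odd. The lemma asks that, for all but at most two primes $p \mid P$, at least $m/3$ of these $m$ differences are nonzero modulo $p$. So call $p$ \emph{bad} if $p \mid \Delta_i$ for more than $2m/3$ of the odd indices $i$. Since $p \nmid Q$, reduction modulo $p$ gives
$$y_q \equiv t_q \, Q \, q^{-1} \pmod{p}.$$
Hence $p \mid \Delta_i$ says exactly that the residues of $y_{q_i}$ and $y_{q_{i+1}}$ modulo $p$ coincide.

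\textbf{Pigeonhole step.} Suppose $p_1, p_2, p_3$ are three distinct bad primes. For each $p_j$, fewer than $m/3$ of the indices are not divisible by $p_j$. A union bound therefore shows that almost every index $i$ has $\Delta_i$ divisible by at least two of the $p_j$, and a positive proportion of indices have $\Delta_i$ divisible by two of them. More usefully, for each pair of bad primes, more than $m/3$ of the $\Delta_i$ are divisible by their product. I would then try to turn this common divisibility into a contradiction, in one of two ways:
\begin{itemize}
\item with the bound $0 < \Delta_i$ and $\sum_{i \text{ odd}} \Delta_i < Q$, together with the ordering of the $y_q$;
\item with the rigidity of the congruence $y_q \equiv t_qQq^{-1} \pmod p$ across several primes simultaneously.
\end{itemize}

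\textbf{Main obstacle.} The difficulty is that pure size counting is too weak. Each $\Delta_i < Q$ can be divisible by about $\log Q/\log(n/(D+1)) \asymp n/\log n$ primes $p \mid P$, so counting incidences $(p,i)$ only bounds the number of bad primes by a constant times $n/\log n$, not by $2$.

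The missing ingredient must be structural. The values $t_q$ are fixed by congruences modulo $q$ alone; I would try to show that this makes their residues modulo any fixed $p \mid P$ behave like independent choices, and that equalities of consecutive residues for more than two thirds of the pairs can therefore happen for at most two primes. What I would try first is to fix a bad prime $p$ and study the induced chains of equal residues $y_{q_i} \equiv y_{q_{i+1}} \pmod p$ along the sorted order. A third bad prime should then force an impossible coincidence among the $y_q$ modulo the product of the three primes. If that fails, I would fall back to adjusting the pairing, for example choosing $S'_2$ or the ordering differently, which the later argument seems to permit.
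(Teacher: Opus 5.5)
\textbf{There is a genuine gap.} Your proposal never bounds the number of bad primes. The reason you give for abandoning counting, that each $\Delta_i$ may have $\asymp n/\log n$ prime factors dividing $P$, is a miscount.

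$\Delta_i$ does have many prime factors, but essentially all of them divide $Q$, and $\gcd(P,Q)=1$. Concretely,
\[
\Delta_i = t_{q_{i+1}}Q_{q_{i+1}} - t_{q_i}Q_{q_i} = \frac{Q}{q_iq_{i+1}}\left(q_it_{q_{i+1}} - q_{i+1}t_{q_i}\right).
\]
So for $p \mid P$ you have $p \mid \Delta_i$ if and only if $p$ divides the integer $q_it_{q_{i+1}} - q_{i+1}t_{q_i}$. This is exactly your own observation $y_q \equiv t_qQq^{-1} \pmod p$ with the unit $Q$ cleared, so you were one step away. This integer is nonzero, since $\Delta_i > 0$. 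Its absolute value is less than $n^2$, because $1 \le t_q < q \le n/2$. Every prime dividing $P$ exceeds $\frac{n}{D+1}$, and $\left(\frac{n}{D+1}\right)^3 > n^2$ once $n$ is large. Hence at most two primes $p \mid P$ divide any given $\Delta_i$.

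With this, the incidence count you dismissed finishes the proof at once. Count the pairs $(p,i)$ with $p \mid P$, $i$ odd and $p \mid \Delta_i$. There are at most $2m = \lvert S'_2\rvert$ of them. So at most two primes can divide more than $\frac{2m}{3} = \frac13\lvert S'_2\rvert$ of the $\Delta_i$. Every other $p$ leaves at least $m - \frac{2m}{3} = \frac16\lvert S'_2\rvert$ of them nonzero.

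The same argument fits your pigeonhole formulation. Three bad primes would give more than $3\cdot\frac{2m}{3} = 2m$ incidences among $m$ indices. So some single $\Delta_i$ would be divisible by all three of them. That forces
\[
\left(\tfrac{n}{D+1}\right)^3 < \lvert q_it_{q_{i+1}} - q_{i+1}t_{q_i}\rvert < n^2,
\]
which is a contradiction.

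The structural routes you sketch are not needed: independence of the residues $t_q$ modulo $p$, chains of equal residues, and changing the pairing or $S'_2$. As written, none of them is developed far enough to give the bound of two exceptional primes. The only missing ingredient is the size bound on the cofactor $q_it_{q_{i+1}} - q_{i+1}t_{q_i}$.
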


\begin{proof}
Since $$\Delta_i = t_{q_{i+1}}Q_{q_{i+1}} - t_{q_i}Q_{q_i} = \frac{Q}{q_iq_{i+1}}(q_it_{q_{i+1}} - q_{i+1}t_{q_i}),$$ a prime $p \mid P$ divides $\Delta_i$ precisely when $p$ divides $q_i t_{q_{i+1}} - q_{i+1} t_{q_i}$. This latter integer is smaller than $n^2$, while every prime divisor of $P$ is larger than $\frac{n}{D+1}$. Hence, if $n$ is large enough, there can be at most two prime factors of $P$ that divide $q_i t_{q_{i+1}} - q_{i+1} t_{q_i}$. As there are a total of $\frac{1}{2} \lvert S'_2 \rvert$ values of $\Delta_i$ to consider, we deduce $$\sum_{p \mid P} \sum_{\substack{i < \lvert S'_2\rvert \text{ odd} \\ p \mid \Delta_i}} 1 \le \lvert S'_2 \rvert.$$ It follows that there are at most two exceptional primes $p \mid P$ for which more than $\frac{1}{3}\lvert S'_2\rvert$ of the $\Delta_i$ vanish modulo $p$. For every other $p \mid P$, at least $\frac{1}{6} \lvert S'_2\rvert$ of the $\Delta_i$ are non-zero modulo $p$.
\end{proof}

Let $p$ be a non-exceptional prime divisor of $P$. We then define $R_p$ to be the number of quadruples $(i, j, \varepsilon_i, \varepsilon_j)$ for which $p$ divides $\varepsilon_i \Delta_i + \varepsilon_j \Delta_j$, where $i, j \in \{1, 3, \ldots, \lvert S'_2 \rvert - 1 \}$ and $\varepsilon_i, \varepsilon_j \in \{-1, 1\}$.

\begin{lemma} \label{rupper}
We have $$\sum_{\substack{p \mid P \\ p \text{ non-exceptional}}} R_p \le 19 \lvert S'_2 \rvert^2.$$
\end{lemma}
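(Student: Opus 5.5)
The plan is to split the quadruples $(i,j,\varepsilon_i,\varepsilon_j)$ counted by $R_p$ into three classes: diagonal quadruples with opposite signs, diagonal quadruples with equal signs, and off-diagonal quadruples. I bound the contribution of each class summed over all non-exceptional $p \mid P$. Throughout, $m := \frac{1}{2}\lvert S'_2\rvert$ is the number of admissible odd indices. As in the proof of Lemma~\ref{atleastonesix}, I write
$$\Delta_i = \frac{Q}{q_iq_{i+1}}A_i, \qquad A_i := q_it_{q_{i+1}} - q_{i+1}t_{q_i},$$
where $0 < \lvert A_i\rvert < n^2$.

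\emph{Diagonal, opposite signs} ($i=j$, $\varepsilon_j=-\varepsilon_i$). Here $\varepsilon_i\Delta_i+\varepsilon_j\Delta_j=0$, so every prime $p$ divides it. Each $p$ therefore receives exactly $2m = \lvert S'_2\rvert$ such quadruples, and the total is at most $\lvert S'_2\rvert\cdot\omega(P)$. Now $\omega(P) \le \pi(n) = (1+o(1))\frac{n}{\log n}$, while Example~\ref{f2pnt} gives $\lvert S'_2\rvert \ge \left(\frac{1}{12}-o(1)\right)\frac{n}{\log n}$. So for $n$ large, $\omega(P) \le (12+o(1))\lvert S'_2\rvert$, and this class contributes at most $(12+o(1))\lvert S'_2\rvert^2$. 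This is the dominant term and the reason the constant is as large as $19$.

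\emph{Diagonal, equal signs} ($i=j$, $\varepsilon_j=\varepsilon_i$). Here the sum is $\pm 2\Delta_i$, and since $p$ is odd, $p$ divides it iff $p \mid A_i$. The proof of Lemma~\ref{atleastonesix} already shows that each $A_i$ has at most two prime factors dividing $P$. Summing over $i$ and the two sign choices gives at most $4m = 2\lvert S'_2\rvert$ in total, which is negligible.

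\emph{Off-diagonal} ($i\ne j$). Then $\{q_i,q_{i+1}\}$ and $\{q_j,q_{j+1}\}$ are disjoint, and
$$\varepsilon_i\Delta_i+\varepsilon_j\Delta_j=\frac{Q}{q_iq_{i+1}q_jq_{j+1}}\bigl(\varepsilon_iq_jq_{j+1}A_i+\varepsilon_jq_iq_{i+1}A_j\bigr).$$
Primes dividing $P$ do not divide $Q$, so $p$ divides the sum iff it divides the bracket, an integer of absolute value less than $2n^4$. The key point is that the bracket is nonzero. If $\Delta_i=\pm\Delta_j$, then $\lvert\Delta_i\rvert$ would be divisible by both $Q/(q_iq_{i+1})$ and $Q/(q_jq_{j+1})$, hence by $Q$, contradicting $0<\Delta_i<Q$. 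This is the same argument as for distinctness of the $t_qQ_q$. Since every $p\mid P$ exceeds $\frac{n}{D+1}$, a nonzero integer below $2n^4$ has at most $4$ such prime factors once $n$ is large in terms of $D$. The number of off-diagonal quadruples is at most $4m^2=\lvert S'_2\rvert^2$, so this class contributes at most $4\lvert S'_2\rvert^2$.

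Adding the three classes gives at most $(16+o(1))\lvert S'_2\rvert^2 + 2\lvert S'_2\rvert \le 19\lvert S'_2\rvert^2$ for $n$ sufficiently large. The only delicate step is the first one: the trivially vanishing diagonal terms must be controlled by comparing $\omega(P)$ with $\lvert S'_2\rvert$ via the prime number theorem in progressions. The off-diagonal nonvanishing is the other point needing care, and it follows from the coprimality argument above.
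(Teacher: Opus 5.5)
Your proof is correct, but it uses a different case split from the paper.

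The paper divides the quintuples according to whether $p$ divides \emph{both} $\Delta_i$ and $\Delta_j$ or \emph{neither}. The \emph{both} case is controlled through non-exceptionality: for such $p$ at most $\frac13\lvert S'_2\rvert$ of the $\Delta_i$ vanish mod $p$, by the proof of Lemma~\ref{atleastonesix}. This costs $\frac43\lvert S'_2\rvert^2$. Only the \emph{neither} case is then split into $i=j$ and $i\neq j$.

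You split purely by index pattern. For $i\neq j$ you observe that the four-prime-factor bound on the nonzero bracket holds whether or not $p$ divides $\Delta_i$ and $\Delta_j$, so the \emph{both} case is absorbed there. The only place where you count primes dividing a single $\Delta_i$ is the diagonal with equal signs. That costs just $2\lvert S'_2\rvert$, since each $A_i$ has at most two prime factors dividing $P$.

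As a result your argument never uses non-exceptionality, and it actually bounds $\sum_{p\mid P}R_p$ over all prime factors of $P$. Lemma~\ref{atleastonesix} is then needed only to check the hypothesis of the Hal\'asz-type inequality from \cite{FJLS}, not for this count.

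Your nonvanishing argument for the off-diagonal bracket is a valid alternative to the paper's reduction modulo $q_i$. If $\Delta_i=\pm\Delta_j$, then $Q\mid\Delta_i$, exactly as in the distinctness of the $t_qQ_q$. The dominant diagonal term $\lvert S'_2\rvert\,\omega(P)$ is handled the same way in both proofs, with $\omega(P)\le\pi(n)$ compared against $\lvert S_2\rvert$.
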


\begin{proof}
We note that $\sum_{p} R_p$ is equal to the number of quintuples $(i, j, \varepsilon_i, \varepsilon_j, p)$ for which $p$ divides $\varepsilon_i \Delta_i + \varepsilon_j \Delta_j$. Now, if a prime $p$ divides $\varepsilon_i \Delta_i + \varepsilon_j \Delta_j$, then $p$ either divides both $\Delta_i$ and $\Delta_j$, or neither. By using Lemma~\ref{atleastonesix} and its proof, the number of triples $(i, j, p)$ where a non-exceptional prime divisor of $P$ divides both $\Delta_i$ and $\Delta_j$ is at most $$\sum_{\substack{p \mid P \\ p \text{ non-exceptional}}} \left(\sum_{\substack{i < \lvert S'_2\rvert \text{ odd} \\ p \mid \Delta_i}} 1 \right)^2 \le \frac{1}{3} \lvert S'_2 \rvert \sum_{p \mid P} \sum_{\substack{i < \lvert S'_2\rvert \text{ odd} \\ p \mid \Delta_i}} 1 \le \frac{1}{3} \lvert S'_2 \rvert^2.$$ Multiplying by $4$ to account for the signs of $\varepsilon_i$ and $\varepsilon_j$ gives an upper bound for the corresponding number of quintuples $(i, j, \varepsilon_i, \varepsilon_j, p)$ where $p$ divides both $\Delta_i$ and $\Delta_j$. \\

As for the case where $p$ divides neither $\Delta_i$ nor $\Delta_j$, let us first consider the tuples with $i = j$. In this instance, $p$ only divides $\varepsilon_i \Delta_i + \varepsilon_j \Delta_j$ if $\varepsilon_i = -\varepsilon_j$, so for every $p \mid P$ there are at most $\lvert S'_2 \rvert$ such tuples. As the number of prime factors of $P$ is at most the number of primes smaller than or equal to $n$, which in turn is smaller than $13 \lvert S'_2 \rvert$ by the bound in Example~\ref{f2pnt}, the number of quintuples we are interested in with $i = j$ is at most $13 \lvert S'_2 \rvert^2$. \\

Finally, if $i \neq j$ and $p$ divides $\varepsilon_i \Delta_i + \varepsilon_j \Delta_j$, then $p$ must divide $$\frac{q_iq_{i+1}q_jq_{j+1}(\varepsilon_i \Delta_i + \varepsilon_j \Delta_j)}{Q} = \varepsilon_i q_j q_{j+1} (q_i t_{q_{i+1}} - q_{i+1} t_{q_i}) + \varepsilon_j q_i q_{i+1} (q_j t_{q_{j+1}} - q_{j+1} t_{q_j}).$$ This quantity is non-zero as it is congruent to $-\varepsilon_i q_j q_{j+1} q_{i+1} t_{q_i} \not \equiv 0 \pmod{q_i}$, while its absolute value is smaller than $n^4$. Hence, it has at most four prime factors $p \mid P$, once again due to $p > \frac{n}{D+1}$. As the number of quadruples $(i, j, \varepsilon_i, \varepsilon_j)$ is at most $\lvert S'_2\rvert^2$, we conclude that, summed over all non-exceptional prime factors of $P$, 
\begin{equation*}
\sum_p R_p \le \left(\frac{4}{3} + 13 + 4\right) \lvert S'_2 \rvert^2 < 19 \lvert S'_2 \rvert^2. \qedhere
\end{equation*} 
\end{proof}

We now choose the $\varepsilon_i$ independently and uniformly from $\{-1,1\}$ and apply~\cite[Theorem 1.4]{FJLS}, which is an inequality of Hal\'asz~\cite{Hal} over $\mathbb{F}_p$. This inequality bounds the probability that a signed sum is congruent to a specific residue class in terms of the quantity $R_p$ we just considered. More precisely, in the notation of~\cite{FJLS}, we let
\begin{align*}
\mathbf{a} &:= (\Delta_1, \Delta_3, \ldots, \Delta_{\lvert S'_2\rvert - 1}), \\
k &:= 1, \\
M &:= \frac{1}{180} \lvert S'_2\rvert, \\
n &:= \frac{1}{2} \lvert S'_2 \rvert, \\
R_k(\mathbf{a}) &:= R_p.
\end{align*}
The hypotheses of~\cite[Theorem 1.4]{FJLS} are then satisfied, and we find that the probability that the sum $$\sum_{i < \lvert S'_2\rvert \text{ odd}} \varepsilon_i\Delta_i$$ is congruent to a given residue class mod $p$ is bounded by $$\frac{1}{p} + \frac{CR_p}{ \lvert S'_2 \rvert^{5/2}} + e^{- \frac{1}{180} \lvert S'_2\rvert},$$ for some absolute constant $C$. In particular, as $e_p < D$ for any prime factor $p$ of $P$ by Remark~\ref{epsmall}, we have $$\Pr\left(\exists j \le e_p : x_1^{(k)} + \sum_{i < \lvert S'_2\rvert \text{ odd}} \varepsilon_i\Delta_i \equiv x_p^{(j)} \pmod{p}\right) < D \left(\frac{1}{p} + \frac{CR_p}{\lvert S'_2 \rvert^{5/2}} + e^{- \frac{1}{180} \lvert S'_2\rvert}\right).$$ Summing over all non-exceptional $p \mid P$ and using Lemma~\ref{rupper} shows that the probability that there exists a non-exceptional prime $p \mid P$ and a $1 \le j \le e_p$ such that $$x_1^{(k)} + \sum_{i < \lvert S'_2\rvert \text{ odd}} \varepsilon_i\Delta_i \equiv x_p^{(j)} \pmod{p}$$ is smaller than
\begin{align*}
&D \sum_{\substack{p \mid P \\ p \text{ non-exceptional}}} \left(\frac{1}{p} + \frac{CR_p}{\lvert S'_2 \rvert^{5/2}} + e^{- \frac{1}{180} \lvert S'_2\rvert}\right) \\
&\le D \sum_{\frac{n}{D+1} < p \le n} \left(\frac{1}{p} + e^{- \frac{1}{180} \lvert S'_2\rvert}\right) + \frac{CD}{\lvert S'_2 \rvert^{5/2}} \sum_{\substack{p \mid P \\ p \text{ non-exceptional}}} R_p \\
&\ll \frac{D \log D}{\log n} + \frac{D}{\sqrt{\lvert S'_2 \rvert}} \\
&< \frac{1}{6D},
\end{align*}
for all sufficiently large $n$. Now, the term $x_1^{(k)} + \sum_{i < \lvert S'_2\rvert \text{ odd}} \varepsilon_i\Delta_i$ is contained in $(-Q, 2Q)$, so in order to get a potential value of $x \in [1, Q)$, we may need to add or subtract $Q$. Fortunately, the above bounding argument works just as well for $$\Pr\left(\exists j \le e_p : \pm Q + x_1^{(k)} + \sum_{i < \lvert S'_2\rvert \text{ odd}} \varepsilon_i \Delta_i \equiv x_p^{(j)} \pmod{p}\right).$$ By the union bound the probability for one of these three terms to be congruent to some $x_p^{(j)} \pmod{p}$ is then smaller than $\frac{1}{2D}$. By another application of the union bound we see that there exists a choice of $\varepsilon_i$ such that $f_{d-1}(xP_pQ - 1) \not \equiv 0 \pmod{p}$ holds for all non-exceptional $p \mid P$ and all $2D$ values of $x \in [1, Q)$ for which $$x \equiv x_1^{(k)} + \sum_{i < \lvert S'_2\rvert \text{ odd}} \varepsilon_i \Delta_i \pmod{Q}$$ for some $1 \le k \le 2D$. Let us fix this choice of $\varepsilon_i$ and denote the resulting values of $x$ by $x^{(1)}, \ldots, x^{(2D)}$. \\

As for the exceptional primes, we claim that for all distinct $k, k' \in \{1, \ldots, 2D\}$ and all $p \mid P$ (so in particular for the exceptional primes) we have $x^{(k)} \not \equiv x^{(k')} \pmod{p}$. Indeed, $x^{(k)}$ and $x^{(k')}$ differ by $t Q_{q_0}$ for some $0 < t < q_0$, while $p \nmid Q_{q_0}$ and $$0 < t < q_0 \le \frac{n}{2D} \le \frac{n}{D+1} < p.$$ Since $f_{d-1} \pmod{p}$ has at most $D-1$ zeros for the at most two exceptional primes, while we have $2D$ values of $x^{(k)}$ to work with, we conclude that there must be at least two $x^{(k)}$, $x^{(k')}$ such that $f_{d-1}(xP_pQ - 1) \not \equiv 0 \pmod{p}$ for all $p \mid P$ and $x \in \{x^{(k)}, x^{(k')}\}$. This ensures the third property. As for the first property, $x^{(k)}$ and $x^{(k')}$ differ by at least $Q_{q_0}$, so at least one of them is larger than $Q_{q_0} > \frac{Q}{n}$. Hence, there exists an $x$ satisfying all three properties, finishing the proof.
\end{proof}

\section{Declaration of AI usage}
Extending the author's construction of an $x$ satisfying the first two properties of Theorem~\ref{reductio}, ChatGPT 5.6-Sol Pro discovered how to apply~\cite[Theorem 1.4]{FJLS} to ensure that the third property holds as well. This paper is a human-written simplification of the proof that ChatGPT came up with. For the sake of transparency, the original AI-written output is still available at~\cite{GPT}.

\end{document}